\documentclass[12pt,table]{amsart}
\usepackage{amsmath,amsfonts,amssymb,amsthm,amstext,hyperref,verbatim,lmodern,ytableau,textcomp,color,young,youngtab,tikz,subfiles,tikz-cd,bbold}
\usepackage{xcolor}
\usepackage{enumerate, verbatim,mathtools,graphicx,pgf}
\usepackage{mathrsfs}
\usepackage[mathscr]{euscript}
\usepackage[utf8]{inputenc}

\usepackage{calligra}
\usepackage{setspace,fancyhdr}
    \newtheorem{thm}{Theorem}[section]

    \newtheorem{conj}[thm]{Conjecture}
    
    \newtheorem*{conj*}{Conjecture}
\theoremstyle{definition}
    \newtheorem{definition}[thm]{Definition}

    \newtheorem{rem}[thm]{Remark}
    \newtheorem{exmp}[thm]{Example}

\numberwithin{equation}{section}
\usepackage[square,sort&compress,comma,numbers]{natbib}
\usepackage{hyperref}

\definecolor{amber}{rgb}{1.0, 0.75, 0.0}
\definecolor{ao}{rgb}{0.0, 0.50, 0.0}
\usepackage[a4paper,top=3cm,bottom=2cm,left=3cm,right=3cm,marginparwidth=1.75cm]{geometry}

\usepackage{amsmath}
\usepackage{graphicx}
\usepackage[colorinlistoftodos]{todonotes}

\title{Minimal Inversions in Integer Matrices of Fixed RSK Shape}

\author{Nimisha Pahuja}
    \address{ICTS-TIFR, Bengaluru, 560097}
    \email{nimisha.pahuja@icts.res.in}

\begin{document}

\begin{abstract}
 The Robinson-Schensted-Knuth (RSK) algorithm maps an integer matrix to a pair of semi-standard Young tableaux (SSYTs) whose underlying shape has the same integer partition. We study the set of matrices $\mathcal{M}_\lambda$ associated with a given partition $\lambda$ vis-a-vis the number of inversions of the matrix.  In the case where the integer matrix is a permutation matrix, the resulting tableaux are standard Young tableaux or SYTs. Han (EJC, 2005) combinatorially studied the set of permutations that map to SYTs of shape $\lambda$ under the RSK algorithm and counted the permutations with the minimum number of inversions in that set, as well as formulated the minimal number of inversions. Han's work can be extended to a case where the matrix is a general integer matrix and the tableaux are semi-standard Young tableaux. We have conjectured a formula for the minimal number of inversions in the set $\mathcal{M}_\lambda$ for a fixed $\lambda$. We further provide a conjecture for the characterisation of the minimal generalised matrices.

\end{abstract}

\keywords{RSK algorithm, permutation inversions}
%
%
\maketitle


\section{Introduction}\label{sec:introPASEP}

A permutation has many statistics associated with it. One of the most fundamental is the length of its longest increasing subsequence. Robinson~\cite{Robinson} introduced the insertion algorithm in the study of representations of the symmetric group. Decades later, Schensted~\cite{Schensted} independently rediscovered it and connected it to the problem of longest increasing and decreasing subsequences. The resulting correspondence, now known as the Robinson-Schensted algorithm, establishes a bijection between a permutation of $N$ elements and a pair of standard Young tableaux of the same shape, where the shape is a partition of $N$.

Moreover, this partition can also be characterised in terms of increasing subsequences of the permutation, via Greene's theorem~\cite{Greene}. By a slight abuse of notation, we will refer to this partition as the shape of the permutation $\omega$. 

Knuth later generalised the Robinson-Schensted algorithm to associate a nonnegative integer matrix with a pair of semi-standard Young tableaux. This extension is known as the Robinson-Schensted-Knuth (RSK) algorithm. In this setting, the shape of a matrix is defined to be the common shape of the resulting pair of semistandard Young tableaux.

Another classic statistic of a permutation is its inversion number. An inversion of a permutation $\omega$ is a pair of entries $(\omega_i, \omega_j)$ that appear in descending order, that is, indices $i<j$ such that $\omega_i>\omega_j$.

Hohlweg~\cite{Hohlweg} studied the set of permutations of a fixed shape $\lambda$, denoted $\mathcal{S}_\lambda$, and characterised those with the minimum number of inversions. He also obtained an explicit formula for this minimum number. Han~\cite{Han} later provided a combinatorial proof of Hohlweg's result using Viennot's geometric construction involving shadow line representation of a permutation~\cite{Sagan}.

In this paper, we consider the corresponding set of matrices $\mathcal{M}_\lambda$ and aim to generalise Hohlweg's results to this setting. Since a permutation of $N$ elements can be represented as a $0$-$1$ matrix of size $N \times N$, the minimal permutations characterised by Hohlweg and Han belong to $\mathcal{M}_\lambda$, and are in fact minimal in this larger set as well. Our focus, however, is on the non-trivial case in which the size of the matrices in $\mathcal{M}_\lambda$ is as small as possible. This minimal size is equal to the number of parts of the partition $\lambda$.


We therefore fix a partition $\lambda$ with $n$ parts and restrict our attention to matrices in $M_\lambda$ of size $n \times n$. Our goal is to study the inversion statistic on this set, and in particular to determine the minimum number of inversions among matrices of shape $\lambda$.  A matrix $M \in \mathcal{M}_\lambda$ that attains this minimum will be called a \textit{minimal matrix of shape} $\lambda$. This matrix need not be unique.

This leads us to the following conjecture.

\begin{conj}\label{conj:hankel}
Let $\lambda$ be a partition with $n$ parts, and let 
$M=(m_{i,j})_{1\le i,j \le n} \in \mathcal{M}_\lambda$ 
be a minimal matrix of shape $\lambda$. Then $M$ is symmetric, that is,
\[
m_{i,j} = m_{j,i} \quad \text{for all } 1 \le i,j \le n.
\]
Moreover, every minimal matrix of shape $\lambda$ is Hankel: there exists a sequence of integers 
\[
s_2, s_3, \dots, s_{2n}
\]
such that
\[
m_{i,j} = s_{i+j}
\quad \text{for all } 1 \le i,j \le n.
\]

\end{conj}

\begin{thm}\label{thm:char}

Let $\lambda$ be a partition with $n$ parts, and let 
$r_i = \lambda_i - \lambda_{i+1}$ for $1 \le i < n$, and 
$r_n = \lambda_n$. 
Thus $r_i$ denotes the number of columns of height $i$ in $\lambda$.

Define
\[
a_i = \left\lfloor \frac{r_i}{2} \right\rfloor,
\qquad
b_i = \left\lceil \frac{r_i}{2} \right\rceil,
\]
or vice versa.

Assume Conjecture~\ref{conj:hankel} holds, so that every minimal matrix is Hankel. 
Let $M=(m_{i,j})$ be a minimal Hankel matrix of shape $\lambda$, 
and write $m_{i,j}=s_{i+j}$. Then
\[
s_i = a_{i-1}
\quad \text{and} \quad
s_{2n-i} = b_{i-1}
\qquad
\text{for } 2 \le i \le n.
\]

\end{thm}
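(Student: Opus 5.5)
The plan is to compute, for an arbitrary Hankel matrix, both its RSK shape and its inversion number explicitly in terms of $s_2,\dots,s_{2n}$. Minimising the inversion number then becomes a discrete optimisation over the $s_k$ under linear constraints, which I expect to reduce to independent choices of the form ``split $r_i$ into two nonnegative integers''. I will identify the matrix with a two-line array, so $M$ has $m_{i,j}$ copies of the biletter $(i,j)$. An inversion of $M$ is then a pair of cells $(i,j),(k,l)$ with $i<k$ and $j>l$, counted with weight $m_{i,j}m_{k,l}$.

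\textbf{Step 1 (shape via Greene).} By Greene's theorem and its dual for RSK, $\lambda'_1+\dots+\lambda'_k$ is the maximum total weight of $k$ disjoint ``strict antichains''. These are sequences of cells with strictly increasing row index and strictly decreasing column index, where each cell can be used as often as its multiplicity allows. In an $n\times n$ matrix a strict antichain has at most $n$ cells, and one of length $n$ must lie on the main antidiagonal $i+j=n+1$. Hence $\lambda$ has exactly $n$ parts iff $s_{n+1}\ge 1$. More generally, the antidiagonals $i+j=n+1-t$ and $i+j=n+1+t$ each consist of strict antichains of length $n-t$.

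I would show that for a Hankel matrix the optimal families of antichains can be taken to be unions of whole antidiagonals, taken in order of length. If so, the number of columns of height $i$ of $\lambda$ is
\[
r_i \;=\; s_{i+1}+s_{2n+1-i}\qquad (1\le i<n),
\]
with the height-$n$ columns accounted for by the main antidiagonal. Proving this is an uncrossing or exchange argument on chains inside the staircase structure of constant antidiagonals. I expect it to be the main obstacle, since a general antichain can cut across several antidiagonals. I would first try to prove it by induction on $n$: delete the first row and last column, which again gives a Hankel matrix.

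\textbf{Step 2 (inversions of a Hankel matrix).} Inversions between two cells on the same antidiagonal $i+j=k$ occur for every pair of distinct cells there, since such cells are automatically in NE/SW position. Inversions between cells on different antidiagonals $k<k'$ depend only on the lengths of those antidiagonals and the relative geometry. So $\operatorname{inv}(M)=\sum_k c_k s_k^2+\sum_{k<k'} c_{k,k'}s_ks_{k'}$, where $c_k$ and $c_{k,k'}$ are explicit combinatorial coefficients, and $c_k$ is essentially $\binom{\ell_k}{2}$ for the antidiagonal length $\ell_k$.

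\textbf{Step 3 (optimisation).} Substitute the constraint $s_{i+1}+s_{2n+1-i}=r_i$ from Step 1. The two antidiagonals in each such pair have the same length, and are symmetric under the map $(i,j)\mapsto(n+1-j,n+1-i)$. So after substitution the part of the quadratic form depending on the split of $r_i$ should be a convex function of $s_{i+1}$ that is symmetric about $r_i/2$. The cross terms between different pairs should depend only on the sums $r_i$, again by this symmetry.

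If that holds, each pair is optimised independently. The minimum is attained exactly when $\{s_{i+1},s_{2n+1-i}\}=\{\lfloor r_i/2\rfloor,\lceil r_i/2\rceil\}$, which is the statement, with the ``or vice versa'' reflecting the two optimal choices. Checking that the cross terms really are independent of the splits is the second place where care is needed. I would verify it by pairing each inversion between antidiagonals $k$ and $k'$ with its image under the anti-transpose symmetry.
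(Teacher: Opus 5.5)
Your route is the paper's route: Greene's theorem gives $s_{k+1}+s_{2n+1-k}=r_k$, and then each pair is balanced. Steps 1 and 2 are correct in substance, but Step 3 fails, because the cross terms between different pairs are \emph{not} functions of the $r_i$ alone. The anti-transpose pairing only shows that $s_{i+1}s_{j+1}$ and $s_{2n+1-i}s_{2n+1-j}$ have equal coefficients, and likewise the two mixed products. Independence of the splits would need the same-side coefficient to equal the opposite-side one, and it does not. Already for $n=4$, the cells $(1,2),(2,1)$ of antidiagonal $3$ form inversions with $(3,1)$ and $(1,3)$ on antidiagonal $4$. They form none with antidiagonal $6$, because $(2,4),(3,3),(4,2)$ all lie weakly south-east of both. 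So the cross term between the pairs $(s_3,s_7)$ and $(s_4,s_6)$ is $2(s_3s_4+s_7s_6)=r_2r_3+d_2d_3$, where $d_i=s_{i+1}-s_{2n+1-i}$. The full split-dependent part of $\mathrm{inv}(M)$ is then the coupled form $\tfrac12 d_2^2+d_3^2+d_2d_3$.

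This coupling changes the minimisers. Take $\lambda=(4,4,2,1)$, so $r_1=0$, $r_2=2$, $r_3=r_4=1$. The Hankel matrices
\[
\begin{bmatrix}0&1&1&1\\1&1&1&0\\1&1&0&1\\1&0&1&0\end{bmatrix}
\qquad\text{and}\qquad
\begin{bmatrix}0&2&0&1\\2&0&1&1\\0&1&1&0\\1&1&0&0\end{bmatrix}
\]
both have shape $\lambda$ and $23$ inversions, the minimum over the six Hankel matrices of this shape. Yet the second has $\{s_3,s_7\}=\{2,0\}$.

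In the other direction, take $\lambda=(3,3,2,1)$. The floor--ceiling choice $s_3=s_4=1$, $s_6=s_7=0$ has $20$ inversions, while $s_3=s_6=1$, $s_4=s_7=0$ has $18$. So the ``or vice versa'' cannot be chosen pair by pair.

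Hence your ``exactly when'' fails in both directions. If the conjecture holds for $(4,4,2,1)$, the second matrix above is a minimal matrix violating the stated conclusion. The paper's proof does not fill this gap: it asserts the same decoupling (``inversions are minimised when they are as close to $r_k/2$ as possible'') without argument.

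A minor point on Step 1: it is easier on the chain side, as in the paper. Every right--down path meets each antidiagonal exactly once, so $k$ paths cover at most $\min(k,\ell_c)$ cells of antidiagonal $c$. Nested hooks along the top-right and bottom-left boundaries attain this bound. That gives $\lambda_k=s_{k+1}+\cdots+s_{2n+1-k}$ directly, with no uncrossing argument.
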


The conjectured form of the parameters $s_i$ reflects the Hankel structure imposed on minimal matrices. Since the multiplicities $r_i$ describe the number of columns of height $i$ in the Young diagram of $\lambda$, they encode how the weight of a matrix (sum of all the entries) must be distributed across anti-diagonals to obtain the prescribed shape.

Under the fixed shape restriction, contributions to the inversion number coming from opposite anti-diagonals are naturally paired, because of the anti-diagonals being of the same length. Minimising inversions, therefore, suggests distributing the total mass $r_i$ as evenly as possible between the corresponding positions $s_{i+1}$ and $s_{2n-i+1}$. This leads to the balanced floor–ceiling splitting, which heuristically minimises the inversion count.
To illustrate the conjectured structure, suppose that $\lambda$ has $5$ parts. If $M$ is a minimal matrix of shape $\lambda$, then according to Conjecture~\ref{conj:hankel}, it is Hankel. Hence $M=(m_{i,j})$ satisfies $m_{i,j}=s_{i+j}$, where the sequence 
$(s_2,s_3,\dots,s_{10})$ is determined by the splitting of the column 
multiplicities $r_i$.

In this case, the matrix has the form
\[
 \begin{bmatrix}
 a_1 & a_2 & a_3 & a_4 & s_6 \\
 a_2 & a_3 & a_4 & s_6 & b_1 \\
 a_3 & a_4 & s_6 & b_1 & b_2 \\
 a_4 & s_6 & b_1 & b_2 & b_3 \\
 s_6 & b_1 & b_2 & b_3 & b_4
\end{bmatrix},\]
where for $1 \le i \le 4$ we have 
\[
\{a_i,b_i\}
=
\left\{
\left\lfloor \frac{r_i}{2} \right\rfloor,
\left\lceil \frac{r_i}{2} \right\rceil
\right\},
\]
and the central anti-diagonal entry $s_6$ corresponds to $r_5=\lambda_5$.


The paper is organised as follows. In Section~\ref{sec:prelim}, we recall the necessary background on the Robinson–Schensted–Knuth correspondence, inversion statistics, and related combinatorial tools. In Section~\ref{sec:structure}, we introduce the class of matrices $\mathcal{M}_{\lambda}$ and establish the properties that will be used throughout the paper. Section~\ref{sec:main} contains the proof of our main results, including the determination of the minimal inversion number and the characterisation of minimal matrices. 




\section{Preliminaries and Examples}\label{sec:prelim}
We first introduce the basic concepts and notations that will be used throughout the paper. A \emph{permutation} of a set $S$ is a bijection from the set $S$ to itself. Consider $S=\{1,2,\ldots,N\}= [N]$, then a permutation $\omega$ of $S$ which takes $i \rightarrow \omega_i$ can  be written as $\omega_1\omega_2\cdots \omega_N$ in one-line notation. The set of all such permutations is denoted by $S_N$. A permutation $\omega \in S_N$ can also be represented as a $0-1$ matrix $P=(p_{ij})$ of size $N \times N$ where 
\[p_{ij}=\begin{cases}
  1, & j=\omega_i\\
0, & \text{otherwise}  
\end{cases}
\]

A \emph{partition} of $N \in \mathbb{N}$ is defined to be a sequence $\lambda=(\lambda_1,\lambda_2, \ldots,\lambda_\ell)$ satisfying $\sum \lambda_i = N$ and $\lambda_1 \geq \lambda_2 \geq \cdots \geq \lambda_n > 0$.
We write $\lambda \vdash N$ or $N=|\lambda|$ to denote a partition of $n$. $\lambda_i$'s are called parts of $\lambda$.  The number of non-zero parts in $\lambda$ is denoted by $\ell(\lambda)$. For example, $\lambda=(4,2,2,1) \vdash 9$, and $\ell(\lambda)=4$. 

The \emph{Young's diagram} of $\lambda$ is a left-justified array of n boxes with $\lambda_i$ boxes in $i^{th}$ row $i$ , for $1 \leq i \leq \ell$. For example, for the partition $\lambda=(4,2,2,1) \vdash 8$, the Young diagram is given as

\begin{center}{$\yng(4,2,2,1)$}\\
				$(4,2,2,1)$.
\end{center}

The conjugate of a partition $\lambda$, denoted by $\lambda'$, is the partition whose diagram is the transpose of the diagram of $\lambda$. The conjugate of the partition $(4,2,2,1)$ is $(4,3,1,1)$.

\begin{definition}\label{def:tab}
Let $\lambda \vdash n$. A \emph{Young tableau of shape $\lambda$,} is an array $t$ obtained by filling the boxes in a Young diagram with positive integers. Let $t_{i,j}$ denote the entry in position $(i,j)$ of $t$. We write sh$(t)=\lambda$ .
\end{definition}

A Young tableau is called \emph{standard} if the entries increase strictly along each row and column. If the entries increase weakly across each row, and strictly down each column, then the tableau is known as a \textit{semi-standard Young tableau} (SSYT). 
 
\begin{exmp} Let $\lambda=(4,2,2,1) \vdash 9$ be a partition, here are a few Young tableaux of shape $\lambda$. The first two are examples of semi-standard, and the last is a standard Young tableau.
\end{exmp}
\begin{center}
$\begin{array}{ccc}
\begin{Young}
1 & 1 & 1 & 1\cr
2 & 2\cr
3 & 3\cr
4\cr
\end{Young}\hspace{0.8in}  &
\begin{Young}
1 & 2 & 2 & 4\cr
2 & 4\cr
6 & 7\cr
8\cr
\end{Young}\hspace{0.8in}  &
\begin{Young}
1 & 2 & 4 & 5\cr
3 & 6\cr
7 & 9\cr
8\cr
\end{Young}
\end{array}$
\end{center}

\subsection{Subsequences}
An increasing subsequence of a permutation $\omega=\omega_1\omega_2\ldots\omega_N$is a sequence \[\omega_{i_1} < \omega_{i_2} < \ldots < \omega_{i_k}\] with  $i_1 < i_2 < \ldots < i_k$. The \textit{length} of a subsequence is the number of its elements. An increasing subsequence of maximal length is called a \textit{longest increasing subsequence} (LIS). Analogously, one can define a \textit{decreasing subsequence} and a \textit{longest decreasing subsequence} (LDS). For example, if $\omega = 4\, 6\, 8\, 1\, 3\, 7\, 2\, 5\, 9$, then $6\,7\,9$ and $1\,3\,7\,9$ are increasing subsequences, while $6\,5$ and $6\,3\,2$ are decreasing subsequences.

The lengths of the longest increasing and decreasing subsequences 
of a permutation $\omega$ are intimately related to the shape 
$\mathrm{sh}(\omega)$ via Schensted’s theorem, which states that

\begin{thm}[Schensted 61]\label{th: sch}\cite{Schensted}
For every $\omega \in S_n$, the length of the longest increasing subsequence of $\omega$ is the length of the first row of $P(\omega)$. The length of the longest decreasing subsequence of $\omega$ is the length of the first column of $P(\omega)$.
\end{thm}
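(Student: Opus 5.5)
We prove Schensted's theorem for $\omega = \omega_1\omega_2\cdots\omega_n \in S_n$ by tracking what row insertion does to the first row of the insertion tableau. Let $P_k$ denote the tableau obtained after inserting $\omega_1,\dots,\omega_k$. The key invariant, proved by induction on $k$, is the following. If $\omega_k$ is placed in position $j$ of the first row of $P_k$, then the longest increasing subsequence of $\omega_1\cdots\omega_k$ that ends at $\omega_k$ has length exactly $j$.

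For the induction step, write the first row of $P_{k-1}$ as $x_1<x_2<\cdots<x_m$. We use the invariant in the following stronger form: $x_j$ is the smallest possible last entry of an increasing subsequence of length $j$ in $\omega_1\cdots\omega_{k-1}$. This holds because insertion either appends $\omega_k$ to the row or bumps the smallest $x_j$ exceeding $\omega_k$, replacing it by $\omega_k$. Now $\omega_k$ extends a length-$(j-1)$ subsequence exactly when its end is below $\omega_k$, which happens iff $x_{j-1}<\omega_k$. Hence the position where $\omega_k$ lands is precisely the length of the longest increasing subsequence ending at $\omega_k$, and the minimal-last-entry property is preserved. Elements bumped into lower rows never alter row one. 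At the end, the length of the first row equals the maximum over $k$ of these lengths, which is the length of the longest increasing subsequence.

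For the decreasing statement, the cleanest route is to use the column-length part of the theorem via the reversal $\omega^r = \omega_n\cdots\omega_1$. Decreasing subsequences of $\omega$ are exactly the increasing subsequences of $\omega^r$. The standard fact needed is that $P(\omega^r)=P(\omega)^t$. This follows from two observations. First, inserting from the left (column insertion) commutes with row insertion. Second, column-inserting $\omega_n,\dots,\omega_1$ produces the transpose of row-inserting them. Combining these, the first column of $P(\omega)$ equals the first row of $P(\omega^r)$. By the first part, its length is the longest increasing subsequence of $\omega^r$, which is the longest decreasing subsequence of $\omega$.

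The main obstacle is the commutation lemma between row and column insertion, which requires a careful case analysis of bumping paths. If I want to avoid it, I will use a direct argument instead.

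For the lower bound, I argue directly. A decreasing subsequence $\omega_{i_1}>\cdots>\omega_{i_\ell}$ forces each later element to be inserted weakly left of, and to bump into, the rows occupied by the earlier ones. Each such element therefore pushes the corresponding chain of entries down one row, so the first column must have length at least $\ell$.

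For the upper bound, I exhibit a decreasing subsequence of length equal to the first-column length. To do this, I trace back from the last entry of the first column, following for each bumped entry the element that bumped it out of the row above. This traceback is the expected technical heart of the argument.
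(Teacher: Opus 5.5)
The paper gives no proof of this statement. It is quoted from Schensted, and the reversal property is likewise quoted as Theorem~\ref{th:rev}. Your main route is the standard textbook proof, and it is correct.

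The first-row half works. Each insertion preserves the invariant that the $j$-th entry of the first row is the least possible last entry of an increasing subsequence of length $j$: for lengths $i<j$ the new entry exceeds $x_i$, and for $i>j$ it cannot end such a subsequence. Hence $\omega_k$ lands in the column equal to the length of the longest increasing subsequence ending at $\omega_k$.

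The column half via $P(\omega^r)=P(\omega)^t$ is also right, and your sketch of that identity is the usual one. Commutation of row and column insertion gives, by induction on $n$, that row-inserting $\omega_1,\dots,\omega_n$ equals column-inserting $\omega_n,\dots,\omega_1$. That in turn is the transpose of row-inserting $\omega_n,\dots,\omega_1$, i.e.\ of $P(\omega^r)$. In this paper's setting you can skip the commutation lemma and cite Theorem~\ref{th:rev}; the decreasing half is then a two-line corollary of the increasing half.

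You should drop the fallback ``direct argument'', because both halves of it, as described, are false.

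\emph{Lower bound.} Take $\omega=312$ and the decreasing subsequence $3>2$. After inserting $3,1$ the rows are $(1),(3)$. Then $2$ is appended to the first row, to the right of where $3$ was placed, and bumps nothing. The second row was created by $1$, which is not in the subsequence. So it is not true that later elements of a decreasing subsequence bump into the rows of earlier ones and push a chain down.

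\emph{Upper bound.} Take $\omega=2431$. After $2,4,3$ the rows are $(2,3),(4)$. Inserting $1$ bumps $2$, which bumps $4$, giving $(1,3),(2),(4)$. Tracing back from the bottom of the first column gives $4$, bumped out of row $2$ by $2$, which was bumped out of row $1$ by $1$. But $2$ precedes $4$ in $\omega$, so $4>2>1$ is not a decreasing subsequence of $\omega$. The only witness of length $3$ is $431$. The entry $3$, which bumped $4$ out of row $1$ at an earlier step, is not on that chain, so the certificate cannot be read off a single bumping path.

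If you want to avoid the commutation lemma, the standard alternative is Knuth equivalence. $\omega$ is Knuth equivalent to the reading word $R_\ell\cdots R_1$ of $P(\omega)$, and elementary Knuth moves preserve the length of a longest decreasing subsequence. In the reading word, a decreasing subsequence uses at most one entry per row, while the first column read upward attains $\ell$.
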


 \begin{definition} Let $\omega$ be a permutation. A subsequence $\sigma$ of $\omega$ is said to be \textit{$k$-increasing} if, as a set, it can be written as a disjoint union
 \[\sigma = \sigma_1 \uplus \sigma_2 \uplus \cdots \uplus \sigma_k,\] 
where the $\sigma_i$ are increasing subsequences of $\omega$. \end{definition}

\begin{exmp} If $\omega =  4\, 6\, 8\, 1\, 3\, 7\, 2\, 5\, 9$, then $\sigma = 4\,7\,9 \uplus 1\,2\,5$ is a $2$-increasing sequence.\end{exmp}
Similarly, a subsequence $\sigma$ is called $k$-decreasing if it can be written as a disjoint union of $k$ decreasing subsequences. 
Let $i_k(\omega)$ denote the maximum length of a $k$- increasing subsequence of $\omega$, and let $d_k(\omega)$ denote the corresponding maximum length of a $k$-decreasing subsequence. The following theorem, due to given by Curtis Greene~\cite{Greene}, extends Schensted's Theorem. 
\begin{thm}[Greene's Theorem]\cite{Greene}\label{thm:gr}
Let $\omega \in S_n$, let $\mathrm{sh}(\omega)=(\lambda_1, \lambda_2, \cdots, \lambda_l)$ with conjugate partition $(\lambda'_1, \lambda'_2, \cdots , \lambda'_m)$. Then for every $k \geq 1$, \begin{equation*} i_k(\omega)=\lambda_1+\lambda_2 +\ldots \lambda_k\end{equation*}
\begin{equation*} d_k(\omega)=\lambda'_1+\lambda'_2 +\ldots \lambda'_k \end{equation*}
\end{thm}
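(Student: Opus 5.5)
The plan is the classical route through Knuth equivalence. It splits into three steps: (i) $i_k$ and $d_k$ are constant on Knuth classes; (ii) $\omega$ is Knuth equivalent to the row reading word of $P(\omega)$; (iii) compute $i_k$ and $d_k$ directly on such reading words. I would take step (ii) as the standard fact underlying the RS insertion. Row insertion of a letter into a row is realised by a sequence of elementary Knuth moves, namely $yxz \leftrightarrow yzx$ and $xzy \leftrightarrow zxy$ for $x<y<z$. Hence $\omega \equiv \mathrm{read}(P(\omega))$, where the reading word lists the rows of $P(\omega)$ from bottom to top.

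For step (iii), let $T$ be a standard tableau of shape $\lambda$ and $w=\mathrm{read}(T)$.
\begin{itemize}
\item \emph{Lower bound for $i_k$.} Each row of $T$ appears consecutively in $w$ as an increasing block. So the first $k$ rows give a $k$-increasing subsequence of length $\lambda_1+\cdots+\lambda_k$.
\item \emph{Upper bound for $i_k$.} The entries of each column of $T$ appear in $w$ in decreasing order, since lower rows are read first and columns increase downward. Therefore an increasing subsequence meets each column at most once, and a union of $k$ of them meets column $j$ in at most $\min(k,\lambda'_j)$ entries. Summing over $j$ gives $\sum_j \min(k,\lambda'_j)=\lambda_1+\cdots+\lambda_k$.
\item \emph{The case of $d_k$.} This is dual. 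The columns of $T$ are decreasing subsequences of $w$, which gives the lower bound $\lambda'_1+\cdots+\lambda'_k$. A decreasing subsequence meets each row block at most once, so $k$ of them meet row $i$ in at most $\min(k,\lambda_i)$ entries. Summing gives $\sum_i\min(k,\lambda_i)=\lambda'_1+\cdots+\lambda'_k$.
\end{itemize}

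The main work, and the step I expect to be the obstacle, is step (i): a single Knuth move preserves $i_k$ (and $d_k$). Consider the move $u\,xzy\,v \leftrightarrow u\,zxy\,v$ with $x<y<z$; the other move is handled by the symmetric argument. Going from $zxy$ to $xzy$ cannot decrease $i_k$. Indeed, every increasing subsequence of $u\,zxy\,v$ is still increasing after swapping the adjacent letters $z,x$, because an increasing subsequence never contains both $z$ and $x$ in the order $z$ before $x$.

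For the converse, take a maximal family $\sigma_1\uplus\cdots\uplus\sigma_k$ in $u\,xzy\,v$. The only problem arises when $x$ and $z$ lie in the same $\sigma_i$, say $\sigma_i=\alpha\, x\, z\, \beta$. I would split into cases on $y$.
\begin{itemize}
\item \emph{$y$ is unused.} Replace $z$ by $y$ in $\sigma_i$. The result $\alpha x y \beta$ is still increasing, since $y<z<\min\beta$, and $y$ occurs after $x$ in both words.
\item \emph{$y\in\sigma_j$ with $j\neq i$,} say $\sigma_j=\gamma\, y\, \delta$. Exchange tails to get $\alpha\, x\, y\, \delta$ and $\gamma\, z\, \beta$. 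The first is increasing. For the second, every element of $\gamma$ precedes $y$, so it lies in $u$ or equals one of $x,z$, and it is less than $y<z$; thus $\gamma z\beta$ is increasing, and it remains valid in $zxy$. The tail exchange preserves total size, and $x$ and $z$ are now separated.
\end{itemize}
The same rerouting, with the roles of the letters reversed, handles $d_k$; here the troublesome configuration is $z,x$ lying in one decreasing subsequence of $u\,zxy\,v$. I expect the care to go into checking the positional constraints in the tail exchange when elements of $\gamma$ interleave with $x$ or $z$. Once step (i) is done, combining steps (i)–(iii) gives $i_k(\omega)=i_k(\mathrm{read}\,P(\omega))=\lambda_1+\cdots+\lambda_k$, and likewise for $d_k$.
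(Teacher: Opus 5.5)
The paper does not prove this statement; it quotes Greene's theorem as a black box, with Sagan's textbook as the standard reference. Your proposal is the standard Knuth-equivalence proof from that textbook treatment, and it is correct. The three steps are invariance of $i_k$ and $d_k$ under Knuth moves, equivalence of $\omega$ with the row reading word of $P(\omega)$, and a direct count on reading words. In particular, the bounds $\sum_j \min(k,\lambda'_j)=\lambda_1+\cdots+\lambda_k$ and $\sum_i\min(k,\lambda_i)=\lambda'_1+\cdots+\lambda'_k$ are right.

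Three small remarks.

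\emph{The interleaving worry is vacuous.} The concern you flag at the end cannot arise. The letters $x,z,y$ occupy consecutive positions, the $\sigma$'s are disjoint, and $x,z\in\sigma_i$. Hence $\gamma$ lies entirely in $u$, and $\beta,\delta$ lie entirely in $v$. The clause ``or equals one of $x,z$'' can simply be dropped.

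\emph{The other move.} For $u\,yxz\,v\leftrightarrow u\,yzx\,v$, the repair must place $y$ next to $z$ rather than next to $x$. If $y$ is unused, replace $x$ by $y$, giving $\alpha\,y\,z\,\beta$. Otherwise replace $\alpha\,x\,z\,\beta$ and $\gamma\,y\,\delta$ by $\gamma\,y\,z\,\beta$ and $\alpha\,x\,\delta$. I checked these, and the analogous swaps for $d_k$ under both moves, and they go through. This is the precise sense in which the argument is ``symmetric''.

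\emph{Step (ii) is assumed.} That step is the easy half of Knuth's theorem, which you take as known rather than prove. This is acceptable, but it carries a genuine share of the work.
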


\begin{rem}
    Observe that Greene's theorem does not provide an interpretation of the individual terms $\lambda_k$ and $\lambda'_k$.  In particular, it is not generally true that a $k$-increasing subsequence of maximal length can be obtained by extending a maximal $(k-1)$-increasing subsequence by $\lambda_k$ additional elements. For example, let $\omega = 247951368.$ The shape of this permutation is $(5,3,1)$, so that $\lambda_1 = 5$ and $\lambda_2 = 3$. Thus $i_1(\omega) = 5$ and $i_2(\omega) = 8$. Although $24568$ is an increasing subsequence of maximal length $5$, it cannot be extended by an increasing sequence of length $3$ to form a longest $2$-increasing subsequence. In fact, there is a unique $2$-increasing subsequence of length $8$ given by $ \sigma = 2479 \uplus 1368$, which can only be partitioned into two increasing blocks of size $4$. 
\end{rem}





\section{Matrices of Fixed Shape under RSK}\label{sec:structure}

To relate matrices to the Robinson–Schensted–Knuth correspondence, it is convenient to encode a matrix as a two-line array. This representation allows us to view matrices as weighted analogues of permutations and makes the insertion procedure underlying RSK transparent. We therefore recall the notion of a generalised permutation.
 
A \textit{generalised permutation} is a  two-line array 
\[\omega = \left(\begin{array}{ccccc}
 i_1 & i_2 & i_3 & \ldots & i_N\\
j_1 & j_2 & j_3 & \ldots & j_{N}
\end{array}\right)\]
such that
 \begin{itemize}
		\item $i_1 \leq i_2 \leq \cdots \leq i_N$, and
		\item whenever $i_r=i_s$ with $r \leq s$, we have $j_r \leq j_s$. \\ 
		\end{itemize}

In other words, the top row is weakly increasing, and two pairs with the same top entry have bottom entries in weakly increasing order. 

A matrix $M=(m_{i,j})$ of size $n \times n$ can be written in a two-line notation as a generalised permutation by listing each pair $(i,j)$ exactly $m_{i,j}$ times. That is, we expand each entry of the matrix into repeated pairs.
For example: 
\[\begin{bmatrix}
1 & 0 & 2 \\
0 & 2 & 0 \\
1 & 1 & 0
\end{bmatrix} \text{ correspondes to }
\left(\begin{array}{ccccccc}
				1 & 1 & 1 & 2 & 2 & 3 & 3 \\
				1 & 3 & 3 & 2 & 2 & 1 & 2
				\end{array}\right)\]

We now define inversion and increasing statistics for non-negative integer matrices in a way that naturally extends the corresponding notions for permutations. 

\begin{definition}
    An \textit{inversion} of a matrix $M \in \mathcal{M}_\lambda$ is a pair of positions $(i,j)$ and $(k,l)\}$ such that $m_{ij},m_{kl}>0$, and $i<k$ and $j>l$.
\end{definition}
The total number of inversions in $M$ is therefore
\[
\sum_{i<k,\; j>l} m_{i,j} m_{k,l}.
\]

A sequence in a matrix $M$ is called increasing if the corresponding pairs in its associated generalised permutation form a weakly increasing sequence in both rows. Equivalently, an increasing sequence corresponds to a lattice path in $M$ that moves weakly to the right and weakly downward, and its length is obtained by summing the entries along the path.

A longest increasing subsequence of a matrix may thus be interpreted as a maximal-weight right–down path. By Greene’s theorem, the shape of a matrix can be recovered from the lengths of its longest $k$-increasing subsequences, for $1\leq k \leq n$.

\subsection{Robinson-Schensted-Knuth Algorithm}
The Robinson-Schensted-Knuth (RSK) algorithm constructs a bijection between a non-negative integer matrix $M$ and a pair of semi-standard Young tableaux $(P,Q)$, of the same shape $\lambda$. The bijection is denoted by 
\[M \leftrightarrow (P,Q).\] 
Equivalently, since a matrix may be encoded as a generalised permutation (biword), RSK may be described as an insertion algorithm applied to this two-line array.

\textbf{Insertion}
Let $P$ be a tableau whose rows are weakly increasing and whose columns are strictly increasing. Let $x_1$ be a positive integer. The insertion of $x_1$ into $P$ proceeds as follows:
Compare $x$ with the entries in the first row $R_1$ of $P$. If $x_1$ is greater than or equal to all entries in $R_1$, append $x_1$ at the end of $R_1$. Otherwise, let $x_2$ be the smaller entry in $R_1$ that is strictly greater than $x_1$. Replace $x_2$ by $x_1$, and the bumped entry $x_2$ is then inserted into the second row $R_2$. This bumping process continues row by row until an entry is added at the end of some row $R_s$. The resulting tableau remains semistandard.


Given $\omega$ of size $N$, we construct a sequence of tableau pairs
\begin{center}
$(P_0,Q_0)=(\emptyset,\emptyset), (P_1,Q_1),(P_2,Q_2),\ldots,(P_N,Q_N)=(P,Q),$
\end{center}
inductively, such that at each step sh ($P_k$) = sh ($Q_k), \, \forall \,1 \leq k \leq n$. We write $RSK(\omega)=(P,Q)$.

 Assume that $(P_{k-1},Q_{k-1})$ has been constructed.The RSK algorithm proceeds inductively through the insertion operation, where at each iteration, we obtain $P_k$ by inserting $j_k$ into $P_{k-1}$, and recording $i_k$ in $Q_{k-1}$ in the cell $(s,t)$ where the new cell is added in $P_{k}$.

\begin{exmp}
The following example illustrates the bijection in the $3\times 3$ case. Consider the matrix
\[
M =
\begin{bmatrix}
1 & 1 & 0 \\
0 & 2 & 1 \\
1 & 0 & 1
\end{bmatrix}.
\]
Its associated generalised permutation is
\[
\omega =
\begin{pmatrix}
1 & 1 & 2 & 2 & 2 & 3 & 3 \\
1 & 2 & 2 & 2 & 3 & 1 & 3
\end{pmatrix}.
\]

Applying the RSK correspondence, we obtain the pair of semistandard Young tableaux
\[
P =
\begin{ytableau}
1 & 1 & 2 & 3 \\
2 & 3 \\
3
\end{ytableau}\,
\qquad
Q =
\begin{ytableau}
1 & 1 & 2 & 2 \\
2 & 3 \\
3
\end{ytableau}.
\]
Both tableaux have shape $(4,2,1)$.
\end{exmp}




\subsection*{Some consequences of the RS Algorithm}
\begin{thm}[Schutzenberger 63]\rm \cite{Sagan} \em If $\omega^{-1}$ denotes the inverse of $\omega$, then $P(\omega^{-1}) = Q(\omega)$ and $Q(\omega^{-1}) = P(\omega)$.
\end{thm}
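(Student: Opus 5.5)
The plan is to derive the symmetry from Greene's theorem (Theorem~\ref{thm:gr}). That theorem describes the shape of a permutation through a statistic that is visibly invariant under reflecting the permutation's graph in the main diagonal. For $\omega\in S_N$, consider the set of points $\{(i,\omega_i)\}$ in the plane. The graph of $\omega^{-1}$ is the reflection of this set in the line $x=y$. For $0\le a,b\le N$, let $\omega_{[a,b]}$ be the subsequence of $\omega$ consisting of those $\omega_i$ with $i\le a$ and $\omega_i\le b$, and let $\lambda_\omega(a,b)$ be the shape of its insertion tableau, computed after standardising.

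\textbf{Step 1 (the shape array is symmetric).} A set of points in the plane is an increasing subsequence exactly when it forms a chain in the product order. This condition is preserved by swapping coordinates. Also, the points of $\omega^{-1}$ lying in the rectangle $[1,b]\times[1,a]$ are exactly the reflections of the points of $\omega$ lying in $[1,a]\times[1,b]$. Hence $i_k(\omega_{[a,b]})=i_k(\omega^{-1}_{[b,a]})$ for every $k$. Greene's theorem then gives
\[
\lambda_\omega(a,b)=\lambda_{\omega^{-1}}(b,a)\qquad\text{for all } a,b.
\]

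\textbf{Step 2 (both tableaux can be read off the boundary of the array).} The recording tableau $Q(\omega)$ is, by construction, the chain of shapes $\emptyset=\lambda_\omega(0,N)\subset\lambda_\omega(1,N)\subset\cdots\subset\lambda_\omega(N,N)$. Here $Q(\omega)$ records the cell added at step $a$. It remains to show that $P(\omega)$ is the chain $\lambda_\omega(N,0)\subset\lambda_\omega(N,1)\subset\cdots\subset\lambda_\omega(N,N)$, in the sense that the entries $\le b$ of $P(\omega)$ occupy exactly the shape $\lambda_\omega(N,b)$. This follows from the following lemma: the entries $\le b$ of $P(\omega)$ form the tableau $P(\omega_{[N,b]})$.

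To prove the lemma, inspect the insertion procedure. When a letter $x$ is inserted, it bumps only entries strictly larger than $x$. So inserting a letter $>b$ never moves an entry $\le b$, and it never occupies a cell that an entry $\le b$ would later need. Meanwhile, inserting a letter $\le b$ follows the same bumping path among the entries $\le b$ as it would if the larger letters were absent. The argument is an induction on the number of letters inserted, and this is the step I expect to need the most care. The subtle point is that an entry $\le b$ could in principle be bumped into a row whose end is occupied by larger letters. One must check that it still lands in the same cell, namely at the left of all entries $>b$ in that row.

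\textbf{Step 3 (conclusion).} The two steps combine directly. By Step 2, $P(\omega^{-1})$ is the chain $\lambda_{\omega^{-1}}(N,b)$ for $b=0,\dots,N$. By Step 1 this equals $\lambda_\omega(b,N)$, and by Step 2 again that chain is $Q(\omega)$. So $P(\omega^{-1})=Q(\omega)$. Applying the same identity to $\omega^{-1}$ in place of $\omega$ gives $Q(\omega^{-1})=P(\omega)$.

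For the matrix setting of this paper, the inverse of a generalised permutation corresponds to the transpose $M^{T}$. The same argument applies if one uses weighted right--down paths in place of increasing subsequences, as discussed after the definition of inversions. The cleanest route is to standardise the two-line array, with ties broken consistently with the generalised-permutation ordering. This reduces the matrix case to the permutation case, and destandardising then turns the standard Young tableaux back into semistandard ones.
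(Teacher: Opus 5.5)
The paper does not prove this result itself; it quotes it from Sagan, where the proof uses Viennot's shadow-line construction. There, the shadow diagram of $\omega^{-1}$ is the reflection of that of $\omega$ in the line $x=y$. The first rows of $P$ and $Q$ are read off from the $y$- and $x$-coordinates of the shadow lines, and later rows from the iterated skeletons, so the reflection interchanges $P$ and $Q$.

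Your argument is correct and takes a genuinely different route. You realise $Q(\omega)$ and $P(\omega)$ as the two outer edges of the array of shapes $\lambda_\omega(a,b)$, which is essentially Fomin's growth diagram. Instead of checking local growth rules, you get the symmetry $\lambda_\omega(a,b)=\lambda_{\omega^{-1}}(b,a)$ in one stroke from Greene's theorem, since the invariants $i_k$ of the points in a rectangle are visibly unchanged by reflection. The only other ingredient is the restriction lemma, and your induction handles it correctly. Do state its key point precisely, though: letters $>b$ may well sit in cells that a letter $\le b$ later needs. Such a letter then displaces the first entry $>b$ of the row, which occupies exactly the cell where it would be appended in the restricted tableau, and the displaced letter afterwards meets only letters $>b$.

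As for trade-offs, Viennot's proof is elementary and self-contained. Yours is shorter but leans on Greene's theorem, a deeper result. That is legitimate here, because the paper states Greene's theorem and its standard proof via Knuth equivalence does not use the symmetry theorem, so there is no circularity. In return, your approach passes directly to Knuth's matrix version $M^T\leftrightarrow(Q,P)$ by standardisation. The checks needed there are that standardising the transposed biword, with identical pairs matched in order, gives exactly the inverse of the standardised permutation, and that RSK commutes with standardisation for both $P$ and $Q$.
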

 This result was extended by Knuth~\cite{Knuth} for the case of non-negative integer matrices to say that if $M \leftrightarrow (P, Q)$, then $M^T \leftrightarrow (Q, P)$, where $M^T$ refers to the transpose of $M$. 

\begin{thm}[Schensted 61]\label{th:rev}\rm \cite{Sagan}\em Let $\omega^r$ be the reversal of $\omega$, i.e. if $\omega = x_1x_2\cdots x_n$, then $\omega^r= x_nx_{n-1}\cdots x_1$. \\
If $P(\omega)=P$, then $P(\omega^r) = P^t$, the transpose of $P.$
\end{thm}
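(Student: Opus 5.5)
The plan follows the classical argument in \cite{Sagan}: reduce the statement to a commutation between row insertion and column insertion. Throughout, $\omega = x_1x_2\cdots x_n$ has distinct entries, as in the statement. For a tableau $T$ with distinct entries, write $r_x(T)$ for the row insertion of $x$ into $T$ described above.

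Write $c_x(T)$ for \emph{column insertion}. Insert $x$ into the first column of $T$. If $x$ exceeds every entry there, place it at the bottom of that column. Otherwise $x$ bumps the smallest entry greater than $x$, and the bumped entry is column-inserted into the next column, and so on. Since all entries are distinct, column insertion into $T$ is just row insertion into $T^t$, followed by transposing back:
\[
c_x(T) = \bigl(r_x(T^t)\bigr)^t .
\]
Now define $C(\omega) = c_{x_n}(\cdots c_{x_2}(c_{x_1}(\emptyset))\cdots)$. Applying the displayed identity repeatedly gives $C(\omega) = P(\omega)^t$. It therefore suffices to prove $P(\omega^r) = C(\omega)$.

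\textbf{Key lemma.} For any tableau $T$ with distinct entries and any distinct $x,y$ not in $T$,
\[
r_x\bigl(c_y(T)\bigr) = c_y\bigl(r_x(T)\bigr).
\]
Granting this, the theorem follows by induction on $n$. Since $\omega^r = x_n\cdots x_2x_1$, the last letter inserted is $x_1$, so
\[
P(\omega^r) = r_{x_1}\bigl(P(x_n\cdots x_2)\bigr).
\]
By the induction hypothesis applied to $x_2\cdots x_n$, this equals $r_{x_1}\bigl(c_{x_n}\cdots c_{x_2}(\emptyset)\bigr)$. Using the lemma $n-1$ times, move $r_{x_1}$ inside past all the column insertions, giving $c_{x_n}\cdots c_{x_2}\bigl(r_{x_1}(\emptyset)\bigr)$. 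Finally $r_{x_1}(\emptyset) = c_{x_1}(\emptyset)$ is the single cell containing $x_1$. Hence $P(\omega^r) = C(\omega) = P(\omega)^t$.

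\textbf{Proof of the lemma, which I expect to be the main obstacle.} I would induct on the number of rows of $T$ and split into cases according to where $y$ sits relative to the first row $R_1$ of $T$. Let $T'$ denote $T$ with $R_1$ removed.

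\emph{Case 1: $y$ exceeds every entry of $R_1$.*} Then column insertion of $y$ places $y$ at the end of $R_1$. If moreover $y$ is the largest entry of $T$ overall and $x < y$, the two insertions touch disjoint parts of the tableau, and the case is checked directly.

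\emph{Case 2: $y$ is smaller than some entry of $R_1$.} Here column insertion modifies $R_1$ only by bumping entries from $R_1$ downward within the columns it passes through. One then compares two things: which element $x$ bumps out of $R_1$ (call it $x'$), and what column insertion does to $R_1$. The goal is to show that the first rows agree after both orders of insertion. This reduces the remaining work to the identity $r_{x'}(c_{y'}(T')) = c_{y'}(r_{x'}(T'))$ on $T'$, where $y'$ is the appropriate element (possibly $y$ itself) that column insertion passes down. That identity holds by the induction hypothesis.

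\emph{Degenerate subcase.} The delicate situation is when the element $x$ bumps from $R_1$ is exactly the element that $y$'s column insertion displaces, or is exactly $y$ itself. In that situation $x$ and $y$ must be compared, and each subcase $x<y$, $x>y$ has to be matched explicitly. I would handle it by tracking the element in the first column and the element at the end of $R_1$ separately, since these are where the row and column bumping paths can interact.
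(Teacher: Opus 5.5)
The paper gives no proof of this statement; it only cites Sagan. Your outer reduction is the argument found there, and it is correct: $c_x(T)=(r_x(T^t))^t$ gives $C(\omega)=P(\omega)^t$, and pushing $r_{x_1}$ past $c_{x_n},\dots,c_{x_2}$ works once $r_x c_y=c_y r_x$ is known.

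\textbf{The gap is your proof of that commutation lemma.} An induction that strips off the first row $R_1$ cannot close, because column insertion does not act on $T'$ as a column insertion. Both of your cases rest on incorrect descriptions of $c_y$.

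In Case~1, $y$ exceeding every entry of $R_1$ does not put $y$ at the end of $R_1$. Instead, $y$ enters column~1 at the first row $r\ge 2$ whose entry exceeds $y$, or at the bottom of that column. For $T$ with rows $(1,2),(5,6)$ and $y=3$, one gets $c_3(T)$ with rows $(1,2,6),(3,5)$. Even when $y$ is the maximum, it goes to the bottom of column~1, not into $R_1$. The row-bumping path of $x$ can then push it down a row, so the two insertions do not act on disjoint parts.

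In Case~2, column insertion moves displaced entries to the \emph{right}, and its bumping path moves weakly \emph{upward}. If the path reaches row~1 at a column $k\ge 2$, the entry it carries leaves $T'$ and joins $R_1$, so the rows below $R_1$ do not grow at all. For $T$ with rows $(1,4),(3,6)$ and $y=2$, $c_2(T)$ has rows $(1,3,4),(2,6)$. The part below row~1 has as many cells as $T'=(3,6)$, so it is not $c_{y'}(T')$ for any $y'$. The induction hypothesis therefore never applies, and the difficulty is not confined to your ``degenerate subcase''.

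To repair this, induct on the number of entries by removing the \emph{largest} entry $m$ of $\{x,y\}\cup T$ rather than a row; this is the proof in the cited source.
\begin{itemize}
\item If $m=y$, one checks directly that both orders yield $r_x(T)$ with $m$ appended at the bottom of column~1. The case $m=x$ follows by transposing.
\item If $m\in T$, then $m$ sits at a corner. For a smaller letter, the entries smaller than $m$ in $r_x(T)$ form $r_x(T\setminus\{m\})$, and similarly $c_y(T)$ restricts to $c_y(T\setminus\{m\})$. The entry $m$ stays put unless the bumping path ends at its cell, in which case it moves to the end of the next row (respectively, the next column).
\item The induction hypothesis gives agreement on all entries smaller than $m$. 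A case analysis on whether each bumping path ends at $m$'s current cell then shows that $m$ finishes in the same cell in both orders.
\end{itemize}
Alternatively, you can bypass the lemma using Knuth's theorem. Reversal interchanges the two Knuth relations, so $\omega\equiv\omega_P$ implies $\omega^r\equiv(\omega_P)^r=R_1^rR_2^r\cdots R_\ell^r$. This word is the column reading word of $P^t$, and hence it inserts to $P^t$.
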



\begin{definition} The \textit{reading word} or \textit{row-word} of a tableau $P$ is the permutation \begin{center} $\omega_P = R_\ell R_{\ell-1}\cdots R_1$ \end{center} where $R_1,\cdots R_\ell$ are the rows of $P$.\end{definition}



\section{Main results}\label{sec:main}
We prove conjecture~\ref{conj:hankel} for the case where the fixed shape $\lambda$ is a two-row partition.
\begin{thm}
    Let $\lambda$ be a partition with $n$ parts, where $n =2$ and let $M=(m_{i,j})$ 
be a minimal matrix of shape $\lambda$. Then $M$ is symmetric and a Hankel matrix.
Moreover,  \[M=
\begin{bmatrix}
    s_2 & \lambda_2\\
    \lambda_2 & s_4,
\end{bmatrix}
\]
where $s_2=k$ and $s_4=\lambda_1-\lambda_2-k$ for any $k \in \{0,1,\ldots,\lambda_1-\lambda_2$. 

\end{thm}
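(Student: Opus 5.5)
The plan is to parametrise an arbitrary $2\times 2$ matrix $M=\begin{bmatrix} a & b\\ c & d\end{bmatrix}$ with nonnegative integer entries, compute its RSK shape in terms of $a,b,c,d$ using Greene's theorem, and then minimise the inversion count $\sum_{i<k,\,j>l} m_{i,j}m_{k,l}$ under the shape constraint. For a $2\times 2$ matrix the only pair of positions with $i<k$ and $j>l$ is $(1,2),(2,1)$, so the number of inversions is exactly $bc$.

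\textbf{Step 1: the shape.} Since the shape has at most two rows, it is determined by $\lambda_1+\lambda_2=|M|=a+b+c+d$ (the total weight, i.e.\ the number of letters of the generalised permutation) together with $\lambda_1=i_1(M)$. By the matrix form of Greene's theorem recalled in Section~\ref{sec:structure}, $i_1(M)$ is the maximal weight of a right--down lattice path. There are only two such paths from $(1,1)$ to $(2,2)$, so
\[
\lambda_1=\max(a+b+d,\;a+c+d)=a+d+\max(b,c),
\]
and therefore $\lambda_2=\min(b,c)$. I will double check on a tiny example that weakly increasing subsequences of the biword indeed correspond to such paths, which is where ties in the top row matter; the ordering convention of the biword guarantees this.

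\textbf{Step 2: minimisation.} A matrix has shape $\lambda=(\lambda_1,\lambda_2)$ with $\lambda_2>0$ if and only if $\min(b,c)=\lambda_2$ and $a+d=\lambda_1-\max(b,c)$. For such $M$ we have $bc=\min(b,c)\max(b,c)\ge\lambda_2^2$, with equality if and only if $\max(b,c)=\lambda_2$, i.e.\ $b=c=\lambda_2$. This value is attained: taking $b=c=\lambda_2$ forces $a+d=\lambda_1-\lambda_2\ge 0$, and any split $a=k$, $d=\lambda_1-\lambda_2-k$ with $0\le k\le\lambda_1-\lambda_2$ produces a matrix of shape $\lambda$ by Step 1.

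\textbf{Step 3: conclusion.} Every minimal matrix is therefore of the form $\begin{bmatrix} k & \lambda_2\\ \lambda_2 & \lambda_1-\lambda_2-k\end{bmatrix}$. Such a matrix is symmetric, and it is trivially Hankel with $s_2=k$, $s_3=\lambda_2$, $s_4=\lambda_1-\lambda_2-k$.

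The main obstacle is Step 1: justifying rigorously that $\lambda_1$ equals the maximal path weight for matrices (the RSK/Schensted statement for biwords) rather than only for permutations. I would cite Knuth's extension of Schensted's theorem, or alternatively standardise the biword to a permutation and apply Theorem~\ref{th: sch}.
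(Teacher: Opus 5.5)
Your proposal is correct and follows essentially the same route as the paper. Both arguments read off the shape as $(a+d+\max(b,c),\,\min(b,c))$ from the maximal right--down path via Greene's theorem and use that the inversion number is $bc$; the paper argues by exchange (if $b>c$, moving $b-c$ onto the diagonal keeps the shape and lowers $bc$ to $c^2$), while your direct bound $bc=\lambda_2\max(b,c)\ge\lambda_2^2$ also shows explicitly that every matrix of the stated form is minimal, and your Step~1 concern can be settled without Greene's theorem by simply inserting the bottom word $1^a2^b1^c2^d$, which yields rows of lengths $a+d+\max(b,c)$ and $\min(b,c)$ directly.
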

\begin{proof}
    Let $\lambda=(\lambda_1,\lambda_2)$ be a two-row partition, $M \in \mathcal{M}_\lambda$ be a minimal matrix. Assume
    \[M=\begin{bmatrix}
    a & b\\
    c & d
\end{bmatrix},
\] is non-Hankel, i.e., $b>c$ (without loss of generality). Thus, the longest increasing subsequence has length $a+b+d$. By Greene's theorem~\ref{thm:gr}, the shape of $M$ is given by 
\[(a+b+d,c).\] Here, $c>0$ because $\lambda$ is a two-row partition. and $M$ has a total of $bc$ inversions.
Consider the matrix
\[M'=\begin{bmatrix}
    a+(b-c) & c\\
    c & d
\end{bmatrix},
\] 
As $M'$ is symmetric, using Theorem~\ref{thm:gr} again, we have the shape of $M$ to be $\lambda_1,\lambda_2$. Thus, $M' \in \mathcal{M}_\lambda$ but has $c^2$ inversions, which is strictly less than $bc$. This contradicts our assumption that $M$ is minimal.
Thus, any minimal matrix of size $2 \times 2$ must be Hankel with parameters $s_2, s_3=\lambda_2$, and $s_4$, where $s_2+s_4=\lambda_1-\lambda_2$. This equation has $\lambda_1-\lambda_2+1$ solutions as the $(1,1)$ and $(2,2)$ pairs do not contribute to any inversions. The symmetry follows from the Hankel property.

 \end{proof}

Since both the RSK shape and the inversion number are preserved under transposition, the set of minimal matrices is closed under transpose. Thus, there is no a priori reason for a minimal configuration to prefer one direction over the other. Numerical computations for small shapes indicate that minimal configurations tend to distribute the weight symmetrically across the matrix. Together, these ideas suggest that minimal matrices should exhibit a highly symmetric structure, depending only on the sum $i+j$. This leads to the main conjecture.

\begin{conj*}[Conjecture~\ref{conj:hankel}]
Let $\lambda$ be a partition with $n$ parts, and let 
$M=(m_{i,j})_{1\le i,j \le n} \in \mathcal{M}_\lambda$ 
be a minimal matrix of shape $\lambda$. Then $M$ is symmetric, that is,
\[
m_{i,j} = m_{j,i} \quad \text{for all } 1 \le i,j \le n.
\]
Moreover, every minimal matrix of shape $\lambda$ is Hankel, that is, there exists a sequence of integers 
\[
s_2, s_3, \dots, s_{2n}
\]
such that
\[
m_{i,j} = s_{i+j}
\quad \text{for all } 1 \le i,j \le n.
\]

\end{conj*}

Assuming Conjecture~\ref{conj:hankel}, we now prove theorem~\ref{thm:char}.
\begin{proof}[Proof of Theorem~\ref{thm:char}]
Consider a minimal matrix $M$. Assuming Conjecture~\ref{conj:hankel}, $M$ is a Hankel matrix with associated anti-diagonal parameters 
\[
(s_2, s_3, \dots, s_{2n}).
\]
Thus $m_{i,j} = s_{i+j}$.
Any right-down path in $M$ from $(1,1)$ to $(n,n)$ intersects 
each anti-diagonal exactly once. Therefore, every such path has 
the same weight,
\[
s_2 + s_3 + \cdots + s_{2n}.
\]
Consequently, a longest increasing subsequence of $M$ has length $s_2 + s_3 + \cdots + s_{2n}$. Because of the Hankel structure of the matrix, a longest $2$-increasing subsequence in $M$ is indeed the union of a longest increasing subsequence with the second longest subsequence obtained after deleting the entries from this LIS.
More generally, a longest $k$-increasing subsequence of $M$ corresponds to a union of $k$ non-intersecting maximal right-down paths. Applying Greene’s theorem, we obtain the system
\[
\begin{aligned}
s_2 + s_3 + s_4\cdots + s_{2n} &= \lambda_1, \\
s_3 + s_4 + \cdots + s_{2n-1} &= \lambda_2, \\
&\ \ \vdots \\
s_{n} + s_{n+1} + s_{n+2} &= \lambda_{n-1}, \\
s_{n+1} &= \lambda_n.
\end{aligned}
\]
Solving this system, we get \[s_{k+1}+s_{2n-k+1}=\lambda_k-\lambda_{k+1}.\]
The quantity on the right-hand side is exactly equal to $r_k$, which is the number of columns of height $k$ in $\lambda$. Looking at the inversions number, the formula becomes a sum of quadratic expressions in $s_k$'s for $k \in \{3,4,\ldots,2n-1\}$. Inversions are minimised when they are as close to $\frac{r_k}{2}$ as possible. But since $s_k$ is an integer for all $k$, we have 

\[
\{s_{k+1},\, s_{2n-k+1}\}
=
\left\{
\left\lfloor \frac{r_k}{2} \right\rfloor,
\left\lceil \frac{r_k}{2} \right\rceil
\right\},
\]
for each $1 < k < n-1$, and $s_{n+1}=r_n$.

Finally, $s_2+s_{2n}=\lambda_1-\lambda_2=r_1$ implies there are $r_1$ pairs with $(1,1)$ or $(n,n)$ and these pairs do not contribute to the inversions of the matrix. Thus, these $r_1$ columns can be distributed between $s_2,s_{2n}$ in $r_1+1$ ways.
\end{proof}

Before computing the number of inversions in minimal matrices, it is useful to examine the semistandard Young tableaux to which they correspond under RSK. If M is symmetric, then the associated tableaux satisfy $P=Q$.  As an example, consider the $3 \times 3$ Hankel matrix with parameters $s_i=2$ for $3 \leq i \leq 5$, and $s_2=0, s_6=4$, that is, 
\[M=\begin{bmatrix}
    0 & 2 & 2 \\
    2 & 2 & 2\\
    2 & 2 & 4.
\end{bmatrix}
\]
The corresponding tableaux look like
\[P=Q=
\begin{ytableau}
1 & 1  & 1  & 1 & 2 & 2 & 3 & 3 & 3 & 3 \\
2 &  2 & 2  & 2 & 3 & 3\\
3 & 3   
\end{ytableau}
\]
In general, for a Hankel matrix with parameters $(s_2,\dots,s_{2n})$, 
the tableaux $P$ and $Q$ consist of:
\begin{itemize}
    \item $s_i$ columns of height $i-1$ filled with the entries $1,2,\dots,i-1$, for $2 \le i \le n+1$, and
    \item $s_i$ columns of height $2n-i+1$ filled with the entries 
    $i-n, i-n+1,\dots,n$, for $n+1 \le i \le 2n$.
\end{itemize}
We refer to the first type as \emph{forward columns} and the second as 
\emph{backward columns}.

Thus, the column multiplicities of the tableaux are determined directly 
by the anti-diagonal parameters $s_i$, and hence by the conjugate partition 
$\lambda'$.

Assume that minimal matrices are Hankel and that the anti-diagonal 
parameters split according to the floor–ceiling rule. 
Then the associated tableaux $P=Q$ are completely determined by the 
column multiplicities encoded by $\lambda'$.

Inversions may be interpreted as interactions between columns of the tableau. 
First, for each column $i$, $\lambda'_i$ gives the height of column $i$. Each column internally contributes $\binom{\lambda'_i}{2}$ inversions among themselves. 

Secondly, additional inversions arise from interactions between entries 
in different columns. 
Two columns of the same type (both forward or both backward) contribute 
$\binom{u}{2}$ inversions, where $u$ is the length of the weakly shorter column. Each column $i$ occurs in such kind of pairing exactly $\lfloor \frac{i}{2} \rfloor$ times.

For the interaction between a forward and a backward column, we look at the overlap of the two. For a column of length $\lambda_i$ filled with the entries $1,2,\dots,\lambda_i-1$, and a column of length $\lambda_j$ filled with entries
    $n-\lambda_j+1,\ldots,n$, the overlapping portion 
of the corresponding column sets has size $\max(0,\lambda'_i + \lambda'_j - n)$, 
and each such overlap contributes 
$\binom{\lambda'_i + \lambda'_j - n}{2}$ inversions. Because of the floor-ceiling splitting, we can consider columns as alternating between the forward and backward type in a tableau associated to a minimal matrix. Thus, we consider the overlap to be happening between columns with odd and even indices. Summing over all such pairs yields the second term.
Combining these contributions gives the stated formula.

\begin{thm}
Let $\lambda$ be a partition and let $\lambda'$ denote its conjugate. 
The minimum number of inversions among words of shape $\lambda$ is given by
\[ \sum_{i \ge 1}
\left( \left\lfloor \frac{i}{2} \right\rfloor + 1 \right)
\binom{\lambda'_i}{2}
\;+\;
\sum_{\substack{i \ge 1 \\ i \text{ odd}}}
\;\sum_{\substack{j \ge 2 \\ j \text{ even}}}
\binom{\lambda'_i + \lambda'_j - n}{2},
\]
where $n = \ell(\lambda)$ is the number of parts of $\lambda$.
\end{thm}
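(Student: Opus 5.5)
Under Conjecture~\ref{conj:hankel} and Theorem~\ref{thm:char}, the minimal matrix is a Hankel matrix $M=(s_{i+j})$ with $s_{k+1}+s_{2n-k+1}=r_k$ split by the floor--ceiling rule, so the minimal inversion number is just $\mathrm{inv}(M)=\sum_{i<k,\,j>l} m_{i,j}m_{k,l}$ evaluated at these parameters. The plan is to compute this sum and match it, term by term, with the stated formula. It helps to reorganise the sum using the tableau $P=Q$ described just before the statement, whose columns are the forward columns $\{1,\dots,h\}$ (multiplicity $s_{h+1}$) and the backward columns $\{n-h+1,\dots,n\}$ (multiplicity $s_{2n-h+1}$). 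Pairs $(i,j)$ with $m_{i,j}>0$ can be grouped by anti-diagonal, and each anti-diagonal $i+j=t$ is partitioned into the $s_t$ ``copies'' of a column. Concretely, the $s_t$ cells $(i,t-i)$ at multiplicity level $c$ correspond to the $c$-th column of height equal to the anti-diagonal length.

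First, I count inversions within one copy of an anti-diagonal, i.e.\ between distinct cells $(i,t-i)$ and $(k,t-k)$ with $i<k$. Every such pair is an inversion, so a column of height $h$ contributes $\binom{h}{2}$. Summing over copies gives $\sum_i\binom{\lambda'_i}{2}$, the ``$+1$'' part of the first sum.

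Second, I count inversions between two distinct copies, either on the same anti-diagonal or on different anti-diagonals. A cell $(i,t-i)$ and a cell $(k,u-k)$ with $i<k$ form an inversion iff $t-i>u-k$. For two forward anti-diagonals $t\le u\le n+1$, I count the pairs $i<k$ with $k-i>u-t$; this should simplify to $\binom{h}{2}$ in the number of cells $h$ of the shorter anti-diagonal, and I would check it by a direct sum. The backward case is symmetric via the rotation $(i,j)\mapsto(n+1-i,n+1-j)$. For a forward anti-diagonal of length $h_1$ against a backward one of length $h_2$, the inverted pairs are supported on the overlap of the row ranges $\{1..h_1\}$ and $\{n-h_2+1..n\}$. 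This gives $\binom{h_1+h_2-n}{2}$ when the overlap is positive and $0$ otherwise, the latter matching the convention $\binom{m}{2}=0$ for $m\le 1$.

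Third comes the bookkeeping, which I expect to be the main obstacle. List the columns of $\lambda'$ in weakly decreasing height, $\lambda'_1\ge\lambda'_2\ge\cdots$. The floor--ceiling splitting should allow the columns to be labelled so that consecutive columns alternate forward and backward: odd-indexed columns are forward and even-indexed ones backward, possibly after using the freedom of assigning $a_i,b_i$. The same-type pairs involving column $i$ and a longer earlier column of the same parity then number $\lfloor i/2\rfloor$, since $\lambda'_i$ is the shorter height in each such pair. Together with the internal term this yields the coefficient $\lfloor i/2\rfloor+1$ on $\binom{\lambda'_i}{2}$, and the cross pairs yield the double sum over odd $i$ and even $j$.

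The delicate points are the following. The pairs $(i,k)$ with $i<k$ and $t-i>u-k$ must be counted correctly when both copies lie on the same anti-diagonal. The alternation must be consistent when some $r_k$ is odd, since the extra column on one side must not break the parity pattern. This is where I would try to prove an exchange lemma showing the choice of which side gets $\lceil r_k/2\rceil$ does not change the total. Finally, the columns of height $n$ ($s_{n+1}$) and height $1$ ($s_2,s_{2n}$) must be checked separately to confirm that they fit the formula.
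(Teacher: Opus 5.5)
Your plan follows the paper's own route: decompose the Hankel matrix into the forward and backward columns of $P=Q$, count internal, same-type and cross-type interactions, then use floor--ceiling alternation. It inherits the same miscount. Once the counts are corrected, the result is not the stated formula, and in fact the statement is false, so the proof cannot be completed.

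\textbf{The error.} For two distinct copies you only count pairs in which the cell of the $t$-copy lies in the higher row. The other orientation also contributes: for $t\le u$, a cell $(k,u-k)$ above a cell $(i,t-i)$, i.e.\ $k<i$, is always an inversion, since $u-k>t-i$. This gives a second $\binom{h}{2}$ for same-type copies. Likewise, for a forward/backward pair, each orientation (forward cell on top, backward cell on top) gives $\binom{h_1+h_2-n}{2}$.

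A quick check: all pairs of distinct cells on one anti-diagonal are inversions. So anti-diagonal $t$ of length $h$ contributes
\[
s_t^2\binom{h}{2}=s_t\binom{h}{2}+\binom{s_t}{2}\cdot 2\binom{h}{2},
\]
meaning two copies of the same column interact in $h(h-1)$ inversions, not $\binom{h}{2}$.

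Your parity count is also off by one: column $i$ has $\lfloor (i-1)/2\rfloor$ earlier columns of the same parity, not $\lfloor i/2\rfloor$. Carried through correctly, your method gives, for the alternating Hankel matrix,
\[
\sum_{i\ge 1}\Bigl(1+2\Bigl\lfloor \frac{i-1}{2}\Bigr\rfloor\Bigr)\binom{\lambda'_i}{2}
+2\sum_{i\ \mathrm{odd}}\ \sum_{j\ \mathrm{even}}\binom{\lambda'_i+\lambda'_j-n}{2}
\]
inversions. This is not the target expression.

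\textbf{Counterexamples to the statement.} Take $\lambda=(3,3)$ with $n=2$. Every $2\times 2$ matrix of this shape has $\lambda_2=\min(m_{12},m_{21})$, so $m_{12},m_{21}\ge 3$ and the minimum is $9$. This agrees with the paper's $n=2$ theorem ($\lambda_2^2$) and with the corrected expression above. The stated formula gives $(1+2+2)+(1+1)=7$. Similarly, for $\lambda=(3,3,3)$ the only $3\times 3$ matrix of that shape is $3$ times the anti-identity, which has $27$ inversions, while the formula gives $21$. So the bookkeeping step you flagged as the main obstacle is exactly where the discrepancy is hidden, and no care there will close it.

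\textbf{The exchange lemma is false.} The choice of which side receives $\lceil r_k/2\rceil$ does change the total. For $\lambda=(3,3,2,1)$, take the Hankel matrices with $s_3=s_4=s_5=1$ and with $s_4=s_5=s_7=1$, all other $s_t=0$. Both have shape $\lambda$, but they have $20$ and $18$ inversions respectively. The reason is that two same-type columns of heights $h>h'$ interact in $2\binom{h'}{2}$ inversions, whereas a cross pair gives only $2\binom{h+h'-n}{2}$. Alternation therefore has to be imposed through the choice of split, and showing it is optimal would need a separate argument.
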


\end{document}